\documentclass[12pt]{article}
\usepackage[top=2cm,bottom=2cm,left=2cm,right=2cm]{geometry}
\usepackage{authblk,amsmath,amsthm,amssymb,enumerate}
\usepackage{graphicx,epstopdf,color}
\usepackage[colorlinks=true,pdfstartview={XYZ null null 1.00},pdfview=FitH,citecolor=blue]{hyperref}
\def\a{\alpha}
\def\b{\beta}
\def\d{\delta}
\def\ep{\varepsilon}
\def\G{\Gamma}
\def\l{\lambda}
\def\th{\theta}
\def\ds{\displaystyle}
\def\cut{\setminus}
\def\C{\mathbb C}
\def\tC{\widetilde C}
\def\R{\mathbb R}

\DeclareMathOperator\re{{Re}}
\DeclareMathOperator\im{{Im}}

\newtheorem{thm}{Theorem}[section]
\newtheorem{lem}[thm]{Lemma}
\numberwithin{equation}{section}

\begin{document}

\title{\bf Error bounds for the asymptotic expansions of the Hermite polynomials}

\author[1]{Wei Shi}
\author[2]{Gerg\H{o} Nemes}
\author[3]{Xiang-Sheng Wang\thanks{Corresponding author. Email: xswang@louisiana.edu}}
\author[4]{Roderick Wong}

\affil[1]{College of Science, Huazhong Agricultural University, Wuhan 430070, P. R. China}
\affil[2]{Alfr\'ed R\'enyi Institute of Mathematics, Re\'altanoda utca 13-15, Budapest H-1053, Hungary}
\affil[3]{Department of Mathematics, University of Louisiana at Lafayette, Lafayette, LA 70503, USA}
\affil[4]{Department of Mathematics, City University of Hong Kong, Tat Chee Avenue, Kowloon, Hong Kong}

\date{}

\maketitle
\begin{abstract}
  In this paper, we present explicit and computable error bounds for the asymptotic expansions of the Hermite polynomials with Plancherel--Rotach scale. Three cases, depending on whether the scaled variable lies in the outer or oscillatory interval, or it is the turning point, are considered separately. We introduce the ``branch cut" technique to express the error terms as integrals on the contour taken as the one-sided limit of curves approaching the branch cut. This new technique enables us to derive simple error bounds in terms of elementary functions. We also provide recursive procedures for the computation of the coefficients appearing in the asymptotic expansions.
\end{abstract}

\noindent
{\bf Keywords:} error bounds; asymptotic expansions; Hermite polynomials.

\noindent
{\bf AMS Subject Classification:} 41A60; 33C45.

\section{Introduction}
Plancherel--Rotach asymptotic expansions for the orthogonal polynomials as the polynomial degree tends to infinity have been studied  extensively in the literature. There are many standard asymptotic techniques including the steepest descent method for integrals \cite{Wong89}, the WKB method for differential equations \cite{Olver97}, the Deift--Zhou method for Riemann--Hilbert problems \cite{Deift99CPAM,Deift93AM}, asymptotic theory of difference equations \cite{Wong14AA}, and Darboux's method \cite{Wong05CA}. The error term for the truncated asymptotic expansion is usually of the same order of magnitude as the first neglected term. In other words, the ratio of the error term and the first omitted term is bounded by a constant independent of the polynomial degree. The existence of such a bound can be proved by standard ``soft" analysis. However, as far as we know, the quantitative information for this error bound is unknown even for the classical orthogonal polynomials. One may expect a rather complicated ``hard" analysis on finding explicit and computable expressions for error bounds.
Various reasons why we need error bounds for asymptotic expansions are discussed in \cite{Olver80SIREV}.
As mentioned in \cite{Wong80SIREV}, the upper bounds of the error term obtained from standard asymptotic techniques are difficult to compute, and thus may not be realistic. In this paper, we will derive explicit and computable error bounds for the asymptotic expansions of the Hermite polynomials.

The main challenge is to find a convenient integral representation of the error term on an appropriate contour so that the error bound is computable. Berry and Howls \cite{Berry91PRSA} proposed the so-called ``adjacent saddles" method which was further developed in \cite{Bennett2018,Boyd93PRSA}.
The key idea of their method is to express the error term as an integral over the ``adjacent contours" passing through the ``adjacent saddles".
However, the technique of ``adjacent saddles" cannot be applied directly to the Hermite polynomials and many other orthogonal polynomials because the integrand (or the phase function) of the integral representation for the Hermite polynomials has a branch point in the complex plane.
It is thus difficult to express the error term as an integral over the ``adjacent contours". Moreover, for the turning point case, when two saddle points coincide with each other, there does not exist any ``adjacent saddle". To resolve these two difficulties, we introduce a new ``branch cut" technique which deforms the contour of integration for the error term to the branch cut of the phase function. More precisely, the contour is defined as the limit of the curves approaching one side of the branch cut. By virtue of the ``branch cut" technique, we are able to find simple error bounds in terms of elementary functions.

The Hermite polynomials can be expressed as contour integrals:
\begin{equation*}
  H_n(x)={n!\over2\pi i}\int_\G e^{2xt-t^2}t^{-n-1}dt,
\end{equation*}
where $\G$ is a counter-clockwisely oriented contour encircling the negative real line (cf. \cite[\href{http://dlmf.nist.gov/18.10.iii}{\S 18.10(iii)}]{NIST:DLMF}). To be more specific, we may choose
\begin{equation*}
  \G=\{\l-i\d:\l<0\}\cup\{\d e^{i\th}:-\pi/2\le\th\le\pi/2\}\cup\{\l+i\d:\l<0\},
\end{equation*}
where $\d>0$ is any fixed positive number; see Figure \ref{fig-G}.

\begin{figure}[htp]
  \centering
  \includegraphics[height=0.08\textheight]{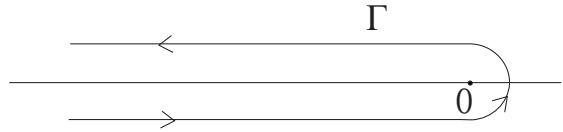}
  \caption{The contour $\G$ with counter-clockwise orientation.}
  \label{fig-G}
\end{figure}

After introducing the Plancherel--Rotach scale, we rewrite the integral representation as
\begin{equation}\label{Hn-G}
  H_n(\sqrt Nx)
  = {n!\over2\pi iN^{n/2}}\int_\G e^{-N[t^2-2xt+(\ln t)/2]}t^{-1/2}dt,
\end{equation}
where $N=2n+1$.
By symmetry, we may assume that $x\ge0$.
We shall denote the phase function in the above integral by
\begin{equation}\label{f}
  f(t;x)=t^2-2xt+{1\over2}\ln t.
\end{equation}
The zeros of $f'(t;x)$ are called saddle points and can be calculated as
\begin{align*}
  t_\pm={x\pm\sqrt{x^2-1}\over2}=
  \begin{cases}
    e^{\pm\b}/2,&~x=\cosh\b>1,~\b>0;\\
    e^{\pm i\a}/2,&~x=\cos\a\in[0,1),~\a\in(0,\pi/2];\\
    1/2,&~x=1.
  \end{cases}
\end{align*}
Note that the two saddle points coincide ($t_+=t_-$) at the turning point $x=1$.
For simplicity, we shall drop the dependence of $f$ on $x$.
There are three cases to be considered.
\begin{enumerate}[{Case} I:]
  \item $x=\cosh\b>1$ with $\b>0$. This is called the outer interval.
  \item $x=\cos\a\in[0,1)$ with $\a\in(0,\pi/2]$. This is called the oscillatory interval.
  \item $x=1$. This is called the turning point.
\end{enumerate}
We will consider these three cases separately. We mention here that an asymptotic expansion with error bounds for the slightly differently scaled $H_n(\sqrt{N+1}\cos\a)$ was given earlier by van Veen \cite{vanVeen31MA}.

The rest of the paper is organized as follows. In Sections \ref{sec2}--\ref{sec4}, we derive the asymptotic expansions with error bounds for Case I, II and III, respectively. The main results are stated at the end of each section. In Section \ref{sec5}, we demonstrate the accuracy of the error bounds through numerical examples. Recursive procedures for the computation of the coefficients appearing in the asymptotic expansions are given in Appendix \ref{appendixa}.

\section{Case I: \texorpdfstring{$x=\cosh\b>1$}{x = cosh beta > 1} with \texorpdfstring{$\b>0$}{beta > 0}}\label{sec2}
The saddle points are $t_\pm=e^{\pm\b}/2$ with $0<t_-<t_+$.
Moreover, $f''(t_-)=2(1-e^{2\b})<0$ and $f''(t_+)=2(1-e^{-2\b})>0$.
We shall deform the contour of integration $\G$ to the path of steepest descent passing through the saddle point $t_-$.
To describe this steepest descent contour, we shall introduce the analytic function
\begin{equation}\label{w}
  w(z)=[f(z)-f(t_-)]^{1/2},
\end{equation}
where the branch of the square root function is chosen so that
\[
  w(z)=-i(z-t_-)\sqrt{-f''(t_-)/2}+\mathcal{O}((z-t_-)^2)=-i(z-t_-)\sqrt{e^{2\b}-1}+\mathcal{O}((z-t_-)^2),
\]
for $z$ in a small complex neighborhood of $t_-$.
Let $z=re^{i\th}$ with $r>0$ and $\th\in(-\pi,\pi)$. We investigate the equation $\im[f(z)-f(t_-)] = \im f(z)=0$; namely,
\begin{equation}\label{r-th}
  r^2\sin(2\th)-2 r\cosh \b \sin\th+{\th\over2}=0.
\end{equation}
Clearly, $\th=0$ satisfies the above equation. By symmetry, we can restrict $\th\in(0,\pi)$ and find two solutions
\begin{align}
  r_-(\th)&={\th/(2\sin\th)\over \cosh \b+\sqrt{\cosh^2 \b-\th/\tan\th}},~~\th\in(0,\pi),\nonumber \\
  r_+(\th)&={\cosh \b+\sqrt{\cosh^2 \b-\th/\tan\th}\over2\cos\th},~~\th\in(0,\pi/2).\label{r+}
\end{align}
It is readily seen that $r_\pm(\th)\to t_\pm=e^{\pm\b}/2$ as $\th\to0$. For consistency, we define $r_\pm(0)=t_\pm$.
Consequently, we can regard $r_-(\th)$ as a symmetric function on $(-\pi,\pi)$ and $r_+(\th)$ a symmetric function on $(-\pi/2,\pi/2)$.
Define two contours
\begin{align*}
  \G_- = \{r_-(\th)e^{i\th}:\th\in(-\pi,\pi)\},~~
  \G_+ = \{r_+(\th)e^{i\th}:\th\in(-\pi/2,\pi/2)\}.
\end{align*}
It follows that the solution of $\im f(z)=0$ is the union $\G_-\cup\G_+\cup\R^+$, where $\R^+$ is the positive real line.
The contours $\G_\pm$ are depicted in Figure \ref{fig-case1}.

\begin{figure}[htp]
  \centering
  \includegraphics[height=7cm]{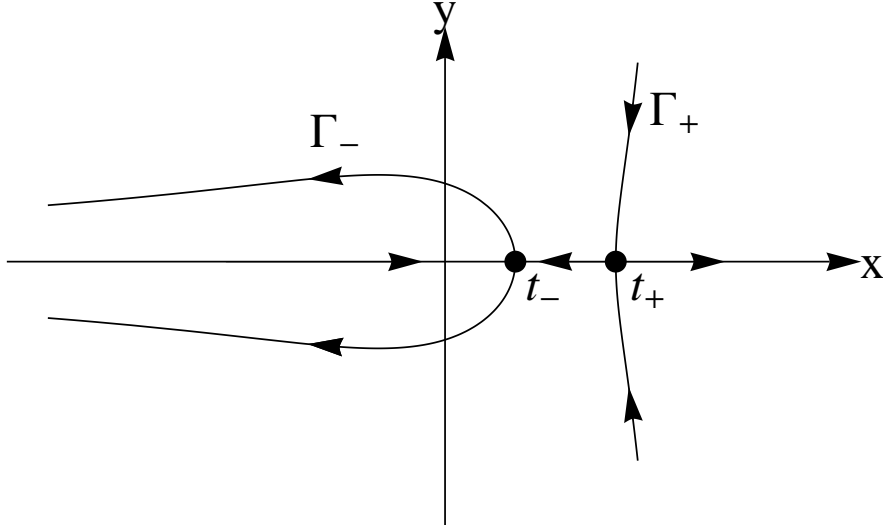}
  \caption{The contours $\G_\pm$. The arrows indicate the increasing direction of the phase function $f$.}
  \label{fig-case1}
\end{figure}

Since $f'(z)$ has no zeros other than $t_\pm$, it follows from the asymptotic behavior $f(z)\sim z^2$ as $z\to\infty$
that $\G_-$
is a steepest descent contour passing through $t_-$, while
$\G_+$
is a steepest ascent contour passing through $t_+$.
Moreover, the solutions of $f(z)-f(t_-)=0$ are real and positive.
Consider $f(t)=t^2-2t\cosh\b+(\ln t)/2$ for $t>0$. Since $f(t)\to-\infty$ as $t\to0^+$ and $f(t)\to\infty$ as $t\to\infty$, we obtain from $f'(t_\pm)=0$ that
the equation $f(t)-f(t_-)=0$ has exactly two positive roots: one is $t_-$; and the other, denoted by $t_c$, is larger than $t_+$.
Hence, we can choose $[t_c,\infty)$ as a branch cut and extend the definition of $w(z)$ on $\C\cut(-\infty,0]\cup[t_c,\infty)$ by analytic continuation.

Now, we deform the contour of integration from $\G$ to the steepest descent contour $\G_-$ with counter-clockwise orientation and then make a change of variable $u=w(t)$:
\begin{equation}\label{Hn-G-}
  H_n(\sqrt N\cosh \b)={n!e^{-Nf(t_-)}\over2\pi iN^{n/2}}\int_{\G_-} e^{-N[f(t)-f(t_-)]}t^{-1/2}dt
  ={n!e^{-Nf(t_-)}\over2\pi iN^{n/2}}\int_\R e^{-Nu^2}{[w^{-1}(u)]^{-1/2}\over w'(w^{-1}(u))}du.
\end{equation}
Recall that $\G_+$ is the steepest ascent contour passing through $t_+$.
We also denote by $\G_0$ a clockwise oriented contour encircling $(-\infty,0]$; namely,
\begin{equation*}
  \G_0=\{\l+i\d:\l<0\}\cup\{\d e^{i\th}:-\pi/2\le\th\le\pi/2\}\cup\{\l-i\d:\l<0\},
\end{equation*}
where $\d>0$ is a small positive number such that $\d<t_c$.
In fact, $\G_0$ is identical to $\G$ but has the opposite orientation.
Since $|w(z)|/|z|\to1$ as $z\to\infty$, we have from the Cauchy integral formula
\begin{equation*}
  {1\over2\pi i}\int_{\G_0\cup\G_+}{z^{-1/2}\over w(z)-u}dz={[w^{-1}(u)]^{-1/2}\over w'(w^{-1}(u))}
\end{equation*}
for any $u\in\R$. Substituting this expression into \eqref{Hn-G-} gives a double integral representation
\begin{equation*}
  H_n(\sqrt N\cosh \b)={n!e^{-Nf(t_-)}\over(2\pi i)^2N^{n/2}}\int_\R e^{-Nu^2}\int_{\G_0\cup\G_+}{z^{-1/2}\over w(z)-u}dzdu.
\end{equation*}
For any $p\ge0$, it follows from
\begin{equation*}
  {1\over w(z)-u}=\sum_{k=1}^{2p}{u^{k-1}\over w(z)^k}+{u^{2p}\over w(z)^{2p}[w(z)-u]}
\end{equation*}
and
\begin{equation}\label{int-k}
  \int_\R e^{-Nu^2}u^{k-1}du=\begin{cases}
    0,&~k=2j,\\
    \G(j+1/2)/N^{j+1/2},&~k=2j+1
  \end{cases}
\end{equation}
that
\begin{gather}\label{Hn-epp}
\begin{split}
  H_n(\sqrt N\cosh \b) =&\; {n!e^{-Nf(t_-)}\over(2\pi i)^2N^{n/2}}
  \bigg\{\sum_{k=1}^{2p}\int_\R e^{-Nu^2}u^{k-1}du\int_{\G_0\cup\G_+}{z^{-1/2}\over w(z)^k}dz
  \\& +\int_\R e^{-Nu^2}u^{2p}\int_{\G_0\cup\G_+}{z^{-1/2}\over w(z)^{2p}[w(z)-u]}dzdu\bigg\}
  \\  =&\;  \frac{n!e^{-Nf(t_-)}}{2\sqrt{\pi\sinh\b}N^{(n+1)/2}}\left\{\sum_{j=0}^{p-1}\frac{A_j(\coth \b)}{N^j}  + \ep_p(N,\b)\right\},
\end{split}
\end{gather}
where
\begin{align}
  \label{firstAint} A_j(\coth \b) =& -\frac{\sqrt{\sinh\b}\G(j+1/2)}{2\pi^{3/2}}\int_{\G_0\cup\G_+}{z^{-1/2}\over w(z)^{2j+1}}dz, \\
   \ep_p(N,\b)=& -\frac{\sqrt{N\sinh\b}}{2\pi^{3/2}}\int_\R e^{-Nu^2}u^{2p}\int_{\G_0\cup\G_{+}}{z^{-1/2}\over w(z)^{2p}[w(z)-u]}dzdu. \label{epp}
\end{align}
We show in Appendix \ref{appendixa1} that the coefficients $A_j(\coth \b)$ are polynomials in $\coth \b$ of degree $3j$ with rational coefficients, and can be calculated using a recursive formula.

To find an upper bound for the remainder $\ep_p(N,\b)$, we need to estimate the integral
\begin{equation}\label{cpu}
  c_p(u)=\int_{\G_0\cup\G_+}{z^{-1/2}\over w(z)^{2p}[w(z)-u]}dz,~~u\in\R.
\end{equation}
If there exists $C_p>0$ such that $|c_p(u)|\le C_p$ for all $u\in\R$, then we obtain from \eqref{int-k} with $k=2p+1$ and \eqref{epp} that
\begin{equation}\label{epp-bound}
  |\ep_p(N,\b)|\le \frac{\sqrt{\sinh\b}}{2\pi^{3/2}} \frac{C_p\G(p+1/2)}{N^p}.
\end{equation}
First, we estimate the integral
\begin{equation*}
  I_0=\int_{\G_0}{z^{-1/2}\over w(z)^{2p}[w(z)-u]}dz.
\end{equation*}
Recall that the contour $\G_0$ encircles the negative real line having a distance $\d>0$ from it.
By analyticity of the integrand, the value $I_0$ does not change if we let $\d\to0^+$. Hence,
\begin{equation}\label{I0}
  |I_0|\le \int_0^{+\infty} {s^{-1/2}\over |w(se^{i\pi})|^{2p}|w(se^{i\pi})-u|}ds
  +\int_0^{+\infty} {s^{-1/2}\over |w(se^{-i\pi})|^{2p}|w(se^{-i\pi})-u|}ds,
\end{equation}
where $w(se^{\pm i\pi})$ is defined as the limit of $w(se^{i\th})$ as $\th\to\pm\pi$.
Note from \eqref{f} and \eqref{w} that
\begin{equation*}
  w(se^{\pm i\pi})^2=s^2+2s\cosh \b+(\ln s)/2-f(t_-)\pm i\pi/2.
\end{equation*}
Thus, we obtain
\begin{equation}\label{R}
  |w(se^{\pm i\pi})|=\{[s^2+2s\cosh \b +(\ln s)/2-f(t_-)]^2+(\pi/2)^2\}^{1/4}=:R(s),
\end{equation}
where the last equality defines $R(s)$, the modulus of $w(se^{\pm i\pi})$.
Let $\th_\pm(s)$ be the phase of $w(se^{\pm i\pi})$.
We then have
\begin{equation*}
  R(s)^2\cos[2\th_\pm(s)]=\re[w(se^{\pm i\pi})^2]=s^2+2s\cosh \b +(\ln s)/2-f(t_-),
\end{equation*}
which implies that
\begin{align*}
  &|w(se^{\pm i\pi})-u|\ge|\im[w(se^{\pm i\pi})]|=R(s)|\sin[\th_\pm(s)]|
  =R(s)\sqrt{1-\cos[2\th_\pm(s)]\over2}
  \notag\\&=\sqrt{R(s)^2-[s^2+2s\cosh \b +(\ln s)/2-f(t_-)]\over2}
  =\sqrt{(\pi/2)^2/2\over R(s)^2+[s^2+2s\cosh \b +(\ln s)/2-f(t_-)]}
  \notag\\&\ge\sqrt{(\pi/2)^2\over 4R(s)^2}={\pi\over4R(s)}.
\end{align*}
Assume that $p\geq 1$. Substituting this inequality into \eqref{I0} yields
\begin{align*}
  |I_0|\le {8\over\pi}\int_0^{+\infty}{s^{-1/2}\over R(s)^{2p-1}}ds.
\end{align*}
To estimate the integral on the right-hand side of the above inequality, we note from \eqref{R} that
$R(s)\ge\sqrt{\pi/2}$ for $s\in[0,1]$ and
$R(s)\ge\sqrt{2s\cosh \b }$ for $s\ge1>t_-$. Thus,
\begin{align}\label{I0-bound}
  |I_0|\le {8\over\pi}\int_0^1{s^{-1/2}\over(\pi/2)^{p-1/2}}ds+{8\over\pi}\int_1^{+\infty}{s^{-1/2}\over(2s\cosh \b)^{p-1/2}}ds
  ={2^{p+7/2}\over\pi^{p+1/2}}+{1\over\pi(p-1)2^{p-7/2}(\cosh \b)^{p-1/2}},
\end{align}
where we have assumed $p\ge2$ to ensure convergence of the second integral.

It remains to estimate the integral
\begin{equation*}
  I_c=\int_{\G_+}{z^{-1/2}\over w(z)^{2p}[w(z)-u]}dz.
\end{equation*}
For $z\in\G_+$, we have $\im f(z)=0$ and $\re [f(z)-f(t_-)]<f(t_+)-f(t_-)<0$.
It then follows from \eqref{w} that $w(z)$ is purely imaginary and $|w(z)-u|\ge|w(z)|$ for $z\in\G_+$ and $u\in\R$.
Now, we parameterize $\G_+$ as $z=z_+(\th)=r_+(\th)e^{i\th}$ with $\th\in(-\pi/2,\pi/2)$, where $r_+(\th)$ is given in \eqref{r+}.
Since $dz/d\th=[r_+'(\th)+ir_+(\th)]e^{i\th}$ and
$|w(z_+(\th))|^2=|f(z_+(\th))-f(t_-)|=f(t_-)-f(z_+(\th))$,
we deduce
\begin{align*}
  |I_c|\le&\int_{-\pi/2}^{\pi/2}{r_+(\th)^{-1/2}\sqrt{[r_+'(\th)]^2+[r_+(\th)]^2}\over
  |w(z_+(\th))|^{2p+1}}d\th
  \notag\\=&\int_0^{\pi/2}{2r_+(\th)^{-1/2}\sqrt{[r_+'(\th)]^2+[r_+(\th)]^2}\over
  [f(t_-)-f(z_+(\th))]^{p+1/2}}d\th.
\end{align*}
As both $\th/\tan\th$ and $\cos\th$ are decreasing functions for $\th\in(0,\pi/2)$, it follows from \eqref{r+} that $r_+(\th)$ is an increasing function for $\th\in(0,\pi/2)$; namely, $r_+'(\th)\ge0$.
To estimate $r_+'(\th)$, we apply an implicit differentiation on \eqref{r-th} and obtain
\begin{align*}
  r_+'(\th)={-2r_+(\th)^2\cos(2\th)+2r_+(\th) \cosh\b \cos\th-1/2\over2r_+(\th)\sin(2\th)-2\cosh\b\sin\th}.
\end{align*}
A simple calculation together with \eqref{r+} gives
\[
  2r_+(\th)\sin(2\th)-2\cosh\b\sin\th= 2\sin\th[2r_+(\th)\cos\th-\cosh\b]=2\sin\th\sqrt{\cosh^2\b-\th/\tan\th}
\]
and
\begin{gather}
\begin{split}
 & 2 r_+(\th)\cosh\b\cos\th-2r_+(\th)^2\cos^2\th \\ & =\cosh\b(\cosh\b+\sqrt{\cosh^2\b-\th/\tan\th})-{(\cosh\b+\sqrt{\cosh^2\b-\th/\tan\th})^2\over2}={\th\over2\tan\th}.\label{r-cos}
\end{split}
\end{gather}
Consequently,
\begin{align*}
  0\le r_+'(\th)={2r_+(\th)^2\sin^2\th+\th/(2\tan\th)-1/2\over2\sin\th\sqrt{\cosh^2\b-\th/\tan\th}}
  \le{r_+(\th)^2\sin\th\over\sqrt{\cosh^2\b-\th/\tan\th}}\le\frac{r_+(\th)\tan\th}{\tanh \b},
\end{align*}
and
\begin{align}\label{Ic0}
  |I_c|\le\frac{2}{\tanh \b}\int_0^{\pi/2}{r_+(\th)^{1/2}/\cos\th\over
  [f(t_-)-f(z_+(\th))]^{p+1/2}}d\th.
\end{align}
Let $\th_0=\arccos(1/4)$. If $\th\in(0,\th_0)$, then $\cos\th>1/4$, $f(t_-)-f(z_+(\th))>f(t_-)-f(t_+)=\sinh(2\b)/2-\b$ and $r_+(\th)<4\cosh\b$. Hence,
\begin{align}\label{Ic1}
  \int_0^{\th_0}{r_+(\th)^{1/2}/\cos\th\over
  [f(t_-)-f(z_+(\th))]^{p+1/2}}d\th\le{2\pi(4\cosh \b)^{1/2}\over[\sinh(2\b)/2-\b]^{p+1/2}}.
\end{align}
If $\th\in(\th_0,\pi/2)$, then $r_+(\th)>2e^{\b}=4t_+>2\cosh \b>2$. This together with \eqref{r-cos} implies that
\begin{align*}
  -f(z_+(\th))& =-r_+(\th)^2\cos(2\th)+2r_+(\th)\cosh \b \cos\th-{1\over2}\ln r_+(\th)
=r_+(\th)^2+{\th\over2\tan\th}-{1\over2}\ln r_+(\th)
  \notag\\& \ge{4\over5}r_+(\th)^2+{1\over5}r_+(\th)^2-{1\over2}\ln r_+(\th)
  \ge{4\over5}r_+(\th)^2,
\end{align*}
and
\begin{align*}
  -f(t_-)={e^{-2\b}\over4}+{\b\over2}+{1+\ln 2\over2}\le{e^{2\b}\over4}+{19\over20}\le t_+^2+{19\over80}r_+(\th)^2\le{3\over10}r_+(\th)^2.
\end{align*}
Coupling the above two inequalities gives
\begin{equation*}
  f(t_-)-f(z_+(\th))\ge{1\over2}r_+(\th)^2.
\end{equation*}
Therefore,
\begin{align}\label{Ic2}
  &\int_{\th_0}^{\pi/2}{r_+(\th)^{1/2}/\cos\th\over
  [f(t_-)-f(z_+(\th))]^{p+1/2}}d\th\le
  \int_{\th_0}^{\pi/2}{r_+(\th)^{1/2}\cdot r_+(\th)/t_+\over
  r_+(\th)^{2p+1}/2^{p+1/2}}d\th
  \notag\\& ={2^{p+1/2}\over t_+}\int_{\th_0}^{\pi/2}r_+(\th)^{-2p+1/2}d\th
  \le{2^{p+1/2}\over t_+}\int_{\pi/3}^{\pi/2}(4t_+)^{-2p+1/2}d\th
  ={2^{-3p+1/2}\pi\over3 t_+^{2p+1/2}},
\end{align}
where we have assumed $p\ge1$. Substituting \eqref{Ic1} and \eqref{Ic2} into \eqref{Ic0} yields
\begin{align}\label{Ic-bound}
  |I_c|\le {2\over\tanh\b}\bigg\{{4\pi \sqrt{\cosh\b}\over[\sinh(2\b)/2-\b]^{p+1/2}}+{2^{-p+1}\pi\over3 e^{(2p+1/2)\b}}\bigg\},~~p\ge1.
\end{align}
A combination of \eqref{cpu}, \eqref{I0-bound} and \eqref{Ic-bound} gives
\begin{align*}
  |c_p(u)|\le|I_0|+|I_c|\le&\; {2^{p+7/2}\over\pi^{p+1/2}}+{1\over\pi(p-1)2^{p-7/2}(\cosh\b)^{p-1/2}}
  \notag\\&+{2\over\tanh\b}\bigg\{{4\pi\sqrt{\cosh\b}\over[\sinh(2\b)/2-\b]^{p+1/2}}+{2^{-p+1}\pi\over3 e^{(2p+1/2)\b}}\bigg\},
\end{align*}
for $p\ge2$. Let $C_p$ be the number on the right-hand side of the above inequality. The error bound \eqref{epp-bound} holds for $p\ge2$.
Note from \eqref{Hn-epp} that
\begin{equation*}
  \ep_p(N,\b)={A_p(\coth \b) \over N^p}+\ep_{p+1}(N,\b).
\end{equation*}
Hence, we may improve the error bound \eqref{epp-bound} to
\begin{equation*}
  |\ep_p(N,\b)|\le {|A_p(\coth \b)| \over N^p}+\frac{\sqrt{\sinh\b}}{2\pi^{3/2}} {C_{p+1}\G(p+3/2)\over N^{p+1}}={\tC_p \over N^p},
\end{equation*}
for $p\ge1$, where
\begin{equation*}
  \tC_p=|A_p(\coth \b)|+\frac{\sqrt{\sinh\b}}{2\pi^{3/2}} {C_{p+1}\G(p+3/2)\over N}.
\end{equation*}
Using the explicit value
\[
-f(t_-) = \frac{e^{-2\b}}{4}+\frac{\b}{2}+\frac{1+\ln 2}{2}
\]
in \eqref{Hn-epp}, we can summarize our results as follows.

\begin{thm}\label{thm-case1} Let $\b >0$ and $N=2n+1 \geq 1$. Then for any $p\ge1$, we have
\begin{equation}\label{Hnasympt}
 H_n(\sqrt N\cosh \b)=\frac{2^n n!e^{N(e^{-2\b}+2\b+2)/4}}{\sqrt{2\pi\sinh\b}N^{(n+1)/2}} \left\{ \sum_{j=0}^{p-1}\frac{A_j(\coth \b)}{N^j}  + \ep_p(N,\b)\right\},
\end{equation}
where
\begin{equation}\label{bound1}
  |\ep_p(N,\b)|\le {\tC_p \over N^p},
\end{equation}
with
\begin{equation*}
  \tC_p=|A_p(\coth \b)|+\frac{\sqrt{\sinh\b}}{2\pi^{3/2}} {C_{p+1}\G(p+3/2)\over N},
\end{equation*}
and
\begin{align*}
C_{p+1}=&{2^{p+9/2}\over\pi^{p+3/2}}+{1\over\pi p2^{p-5/2}(\cosh\b)^{p+1/2}}
  +{2\over\tanh\b}\bigg\{{4\pi\sqrt{\cosh\b}\over[\sinh(2\b)/2-\b]^{p+3/2}}+{2^{-p}\pi\over3 e^{(2p+5/2)\b}}\bigg\}.
\end{align*}
In particular, $\tC_p\to |A_p(\coth \b)|$ as $N\to+\infty$; namely, the error bound is close to the absolute value of the first neglected term when $N$ is large. The coefficients $A_j(\coth \b)$ are polynomials in $\coth \b$ of degree $3j$ with rational coefficients, and these polynomials can be calculated using a recursive formula given in Appendix \ref{appendixa1}.
\end{thm}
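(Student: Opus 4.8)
The plan is to assemble the theorem from the analysis already carried out above: the expansion \eqref{Hn-epp} together with the remainder representation \eqref{epp} supplies the skeleton, so what remains is to recast the leading constant in the stated closed form, to certify a \emph{uniform} bound on $c_p(u)$, and to upgrade the resulting estimate from $p\ge2$ to $p\ge1$. First I would verify the purely algebraic identity that turns the prefactor $\frac{n!e^{-Nf(t_-)}}{2\sqrt{\pi\sinh\b}\,N^{(n+1)/2}}$ of \eqref{Hn-epp} into the one appearing in \eqref{Hnasympt}. Substituting $-f(t_-)=\frac{e^{-2\b}}{4}+\frac{\b}{2}+\frac{1+\ln 2}{2}$ splits the exponent as $-Nf(t_-)=\tfrac14 N(e^{-2\b}+2\b+2)+\tfrac12 N\ln 2$, so $e^{-Nf(t_-)}=2^{N/2}e^{N(e^{-2\b}+2\b+2)/4}$; since $N=2n+1$ we have $2^{N/2}=2^n\sqrt2$, and the $\sqrt2$ combines with the denominator factor $2$ to give $\sqrt{2\pi\sinh\b}$, producing exactly the factor $2^nn!\,e^{N(e^{-2\b}+2\b+2)/4}/\sqrt{2\pi\sinh\b}\,N^{(n+1)/2}$. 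This step is mechanical.

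The substantive step is producing a constant $C_p$ with $|c_p(u)|\le C_p$ for all $u\in\R$, because by \eqref{int-k} with $k=2p+1$ and by \eqref{epp} this immediately yields \eqref{epp-bound}. I would split $c_p(u)=I_0+I_c$ along the two pieces $\G_0$ and $\G_+$ of the contour in \eqref{cpu}. For $I_0$ the branch-cut technique is decisive: letting $\d\to0^+$ collapses $\G_0$ onto the two edges of the cut $(-\infty,0]$, where $w(se^{\pm i\pi})^2$ differs from the real quantity $s^2+2s\cosh\b+(\ln s)/2-f(t_-)$ only by $\pm i\pi/2$. Estimating the imaginary part then forces $|\im w(se^{\pm i\pi})|\ge\pi/(4R(s))$ with $R(s)$ as in \eqref{R}, hence $|w-u|\ge\pi/(4R(s))$ uniformly in $u$, giving $|I_0|\le\frac{8}{\pi}\int_0^\infty s^{-1/2}R(s)^{-(2p-1)}\,ds$; splitting at $s=1$ and using $R(s)\ge\sqrt{\pi/2}$ on $[0,1]$ and $R(s)\ge\sqrt{2s\cosh\b}$ on $[1,\infty)$ yields \eqref{I0-bound}, convergent once $p\ge2$. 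For $I_c$, along the steepest-ascent contour $\G_+$ one has $\im f=0$ and $w$ purely imaginary, so $|w-u|\ge|w|$ and the variable $u$ drops out; parameterizing by $\th$ and controlling $r_+'(\th)$ by implicit differentiation of \eqref{r-th}, which relies on the identity \eqref{r-cos}, bounds $r_+'(\th)\le r_+(\th)\tan\th/\tanh\b$, while a two-regime lower bound on $f(t_-)-f(z_+(\th))$, namely the turning-point gap $\sinh(2\b)/2-\b$ for $\th<\th_0=\arccos(1/4)$ and $\tfrac12 r_+(\th)^2$ for $\th>\th_0$, yields \eqref{Ic-bound}.

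I expect the $I_c$ estimate to be the main obstacle, since it hinges on the monotonicity and growth of $r_+(\th)$ and on locating a clean threshold $\th_0$ beyond which $-f(z_+(\th))\ge\tfrac45 r_+(\th)^2$ dominates $-f(t_-)\le\tfrac{3}{10}r_+(\th)^2$; the branch-cut bound for $I_0$, by contrast, is elementary once the $\pm i\pi/2$ trick is in hand. Summing \eqref{I0-bound} and \eqref{Ic-bound} defines $C_p$ and proves \eqref{epp-bound} for $p\ge2$. To reach the claimed range $p\ge1$, I would invoke the one-term recurrence $\ep_p(N,\b)=A_p(\coth\b)/N^p+\ep_{p+1}(N,\b)$ read off from \eqref{Hn-epp}: applying \eqref{epp-bound} to $\ep_{p+1}$, legitimate since $p+1\ge2$, and bounding $|A_p(\coth\b)|/N^p$ directly gives $|\ep_p(N,\b)|\le\tC_p/N^p$ with $\tC_p$ as stated, the displayed $C_{p+1}$ being simply $C_p$ with $p$ replaced by $p+1$. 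The polynomial structure of the $A_j(\coth\b)$ and the recursion generating them I would defer to Appendix \ref{appendixa1}. Finally, letting $N\to\infty$ in $\tC_p$ leaves $|A_p(\coth\b)|$, confirming that the bound is asymptotically sharp.
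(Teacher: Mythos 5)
Your proposal is correct and follows essentially the same route as the paper: the same splitting $c_p(u)=I_0+I_c$, the same branch-cut collapse of $\G_0$ with the $\pm i\pi/2$ imaginary-part lower bound $|w-u|\ge\pi/(4R(s))$, the same treatment of $\G_+$ via $|w-u|\ge|w|$ with the threshold $\th_0=\arccos(1/4)$, and the same use of the recurrence $\ep_p=A_p(\coth\b)/N^p+\ep_{p+1}$ to pass from $p\ge2$ to $p\ge1$. The prefactor algebra and the identification of $C_{p+1}$ as $C_p$ with $p$ shifted are also exactly as in the text, so no further comparison is needed.
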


\section{Case II: \texorpdfstring{$x=\cos\a\in[0,1)$}{x = cos alpha in [0,1)} with \texorpdfstring{$\a\in(0,\pi/2]$}{alpha in (0,pi/2]}}
The saddle points are $t_\pm=e^{\pm i\a}/2$, and we have
\[
 f''(t_\pm)=2-2e^{\mp2i\a}=4\sin\a e^{\pm i(\pi/2-\a)}.
\]
Now, we define an analytic function $w(z)$ via
\begin{equation}\label{w+-}
w(z)=[f(z)-f(t_+)]^{1/2},
\end{equation}
where the branch of the square root function is chosen so that
\[
  w(z)=\sqrt{2\sin\a}e^{i(\pi/4-\a/2)}(z-t_+)+\mathcal{O}((z-t_+)^2)
\]
for $z$ in a small complex neighborhood of $t_+$. The steepest descent contour passing through $t_+$ is described via the equation $\im[f(z)-f(t_+)]=0$. By using polar coordinates $z=re^{i\th}$, this equation becomes
\[
  r^2\sin(2\th)-2r\cos\a\sin\th+{\th-\th_0\over2}=0,
\]
with
\[
  \th_0=\a-\sin(2\a)/2\in(0,\a).
\]
We are only interested in solutions in the upper half-plane; namely, $\th\in(0,\pi)$.
By the quadratic formula, there are exactly two solutions:
\begin{align*}
  r_{d+}(\th)=\begin{cases}
    \ds{(\th-\th_0)/(2\sin\th)\over\cos\a+\sqrt{\cos^2\a-(\th-\th_0)/\tan\th}},&~\th\in[\a,\pi),\\\\
    \ds{\cos\a+\sqrt{\cos^2\a-(\th-\th_0)/\tan\th}\over2\cos\th},&~\th\in(0,\a],
  \end{cases}
\end{align*}
and
\begin{align*}
  r_{a+}(\th)=\begin{cases}
    \ds{\cos\a+\sqrt{\cos^2\a-(\th-\th_0)/\tan\th}\over2\cos\th},&~\th\in[\a,\pi/2),\\\\
    \ds{\cos\a-\sqrt{\cos^2\a-(\th-\th_0)/\tan\th}\over2\cos\th},&~\th\in(\th_0,\a],
  \end{cases}
\end{align*}
where $r_{d+}(\th)$ and $r_{a+}(\th)$ are defined as piecewise functions so that they are differentiable at $\th=\a$.
This is because the function $\sqrt{\cos^2\a-(\th-\th_0)/\tan\th}$ is not differentiable at $\th=\a$.
For any $\th$ in a small real neighborhood of $\a$, we have, by Taylor expansion, $\cos^2\a-(\th-\th_0)/\tan\th=(\th-\a)^2+\mathcal{O}((\th-\a)^3)$ and $\sqrt{\cos^2\a-(\th-\th_0)/\tan\th}=|\th-\a|+\mathcal{O}((\th-\a)^2)$. Hence,
\begin{align*}
  r_{d+}(\th)&={\cos\a-(\th-\a)\over2[\cos\a-(\th-\a)\sin\a]}+\mathcal{O}((\th-\a)^2)
  ={1\over2}-{1-\sin\a\over2\cos\a}(\th-\a)+\mathcal{O}((\th-\a)^2),\\
  r_{a+}(\th)&={\cos\a+(\th-\a)\over2[\cos\a-(\th-\a)\sin\a]}+\mathcal{O}((\th-\a)^2)
  ={1\over2}+{1+\sin\a\over2\cos\a}(\th-\a)+\mathcal{O}((\th-\a)^2).
\end{align*}
In particular, we have
\[
  r_{d+}(\a)=r_{a+}(\a)={1\over2},~~r_{d+}'(\a)=-{1\over2}\tan\left({\pi\over4}-{\a\over2}\right),~~r_{a+}'(\a)={1\over2}\cot\left({\pi\over4}-{\a\over2}\right).
\]
Now, we define two contours
\begin{align*}
  \G_{d+}=\{r_{d+}(\th)e^{i\th}:\th\in(0,\pi)\},~~
  \G_{a+}=\{r_{a+}(\th)e^{i\th}:\th\in(\th_0,\pi/2)\}.
\end{align*}
Recall that $-Nf(t)$ is the phase function in the integral representation of $H_n(\sqrt N \cos \a)$.
It is readily seen from the asymptotic behaviors $f(z)\sim z^2$ as $z\to\infty$ and $2f(z)\sim \ln z$ as $z\to0$
that $\G_{d+}$ is a steepest descent contour and $\G_{a+}$ is a steepest ascent contour passing through the saddle point $t_+=e^{i\a}/2$.
Moreover, the only zero of $f(z)-f(t_+)$ in the upper half-plane is $t_+$.
Hence, $w(z)$ defined in \eqref{w+-} is analytic for all $\im z>0$ and positive $z>0$.
For $z=se^{i\pi}$ with $s>0$, we define
\begin{equation*}
  w(se^{i\pi})=\lim_{\th\to\pi^-}w(se^{i\th}).
\end{equation*}
We can define in a similar manner the contours $\G_{d-}$ and $\G_{a-}$ which, together with $\G_{d+}$ and $\G_{a+}$, are illustrated in Figure \ref{fig-case2}.

\begin{figure}[htp]
  \centering
  \includegraphics[height=7cm]{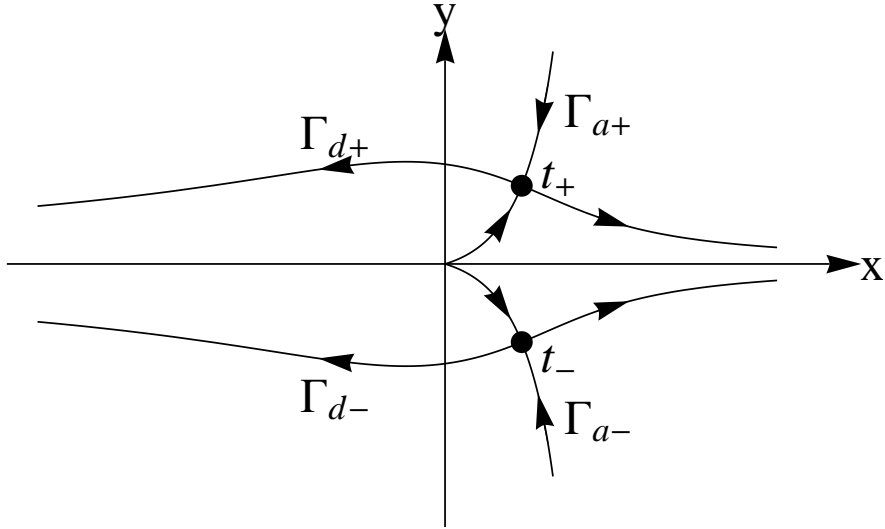}
  \caption{The contours $\G_{d\pm}$ and $\G_{a\pm}$. The arrows indicate the increasing direction of the phase function $f$.}
  \label{fig-case2}
\end{figure}

Now, we return to the integral representation of $H_n(\sqrt N \cos \a)$ in \eqref{Hn-G} and deform the contour of integration to the steepest descent contours $\G_{d+}\cup\G_{d-}$ oriented with increasing parameter $\th$ (i.e., counter-clockwise direction):
\begin{equation*}
  H_n(\sqrt N \cos\a)={n!\over2\pi iN^{n/2}}\int_{\G_{d+}} e^{-Nf(t)}t^{-1/2}dt
  +{n!\over2\pi iN^{n/2}}\int_{\G_{d-}} e^{-Nf(t)}t^{-1/2}dt=: I_+ +I_-,
\end{equation*}
where the last equality defines the two integrals $I_+$ and $I_-$, respectively. Since the contours $\G_{d+}$ and $\G_{d-}$ are symmetric with respect to the real axis, have opposite orientations, and $f(t) = \overline{f(\bar t)}$, the integrals $I_+$ and $I_-$ are complex conjugates of each other. Therefore, it suffices to study the integral $I_+$. First, we introduce the change of variable $u=w(t)$:
\begin{align*}
  I_+=-{n!e^{-Nf(t_+)}\over2\pi iN^{n/2}}\int_{\R} e^{-Nu^2}{[w^{-1}(u)]^{-1/2}\over w'(w^{-1}(u))}du.
\end{align*}
Since $w(z)$ is analytic in the upper half-plane and $|w(z)|/|z|\to1$ as $z\to\infty$, we can infer from the Cauchy integral formula that
\begin{equation*}
  {[w^{-1}(u)]^{-1/2}\over w'(w^{-1}(u))}={1\over2\pi i}\int_{\G_0^+\cup\R^+}{z^{-1/2}\over w(z)-u}dz,
\end{equation*}
where $\R^+$ is the positive real line and $\G_0^+$ is the negative real line with argument $\pi$:
\[
  \G_0^+=\lim_{\d\to0^+}\{\l+i\d:\l<0\}=\{se^{i\pi}:s>0\}.
\]
Both $\R^+$ and $\G_0^+$ are oriented from left to right.
Now, we have the following double integral representation:
\begin{equation*}
  I_+=-{n!e^{-Nf(t_+)}\over(2\pi i)^2N^{n/2}}\int_{\R} e^{-Nu^2}\int_{\G_0^+\cup\R^+}{z^{-1/2}\over w(z)-u}dzdu.
\end{equation*}
For any $p\ge0$, we use the identities \eqref{int-k} and
\begin{equation*}
  {1\over w(z)-u}=\sum_{k=1}^{2p}{u^{k-1}\over w(z)^k}+{u^{2p}\over w(z)^{2p}[w(z)-u]}
\end{equation*}
to derive
\begin{equation}\label{Inexpansion}
I_+=\frac{n!e^{-Nf(t_+)}}{2\sqrt{\pi\sin\a}N^{(n+1)/2}}\left\{\sum\limits_{j=0}^{p-1}\frac{A_j(i\cot\a)}{N^j}e^{\pi i/4}+\widetilde{\ep}_p(N,\a)\right\},
\end{equation}
where
\begin{align}
 \label{Aintegral} A_j(i\cot\a)& =e^{-\pi i/4}\frac{\sqrt{\sin\a}\G(j + 1/2)}{2\pi^{3/2}}\int_{\G_0^+\cup\R^+}{z^{-1/2}\over w(z)^{2j+1}}dz,\\
  \widetilde{\ep}_p(N,\a)  & =\frac{\sqrt{N\sin \a}}{2\pi^{3/2}}\int_\R e^{-Nu^2}u^{2p}\int_{\G_0^+\cup\R^+}{z^{-1/2}\over w(z)^{2p}[w(z)-u]}dzdu. \label{epp+}
\end{align}
The coefficients $A_j(i\cot\a)$ are polynomials in $i\cot\a$ and these polynomials are identical to those appearing in the expansion \eqref{Hnasympt} (see Appendix \ref{appendixa1} for details).

To find an upper bound for the remainder $\widetilde{\ep}_p(N,\a)$, we need to estimate the integral
\begin{equation}\label{cpu+}
  c_p(u)=\int_{\G_0^+\cup\R^+}{z^{-1/2}\over w(z)^{2p}[w(z)-u]}dz,~~u\in\R.
\end{equation}
By expressing $w(z)$ in polar coordinates, one can easily show that
\begin{align*}
  |w(z)-u|\ge|\im w(z)|\ge{|\im w(z)^2|\over2|w(z)|}.
\end{align*}
Note from \eqref{w+-} that
\begin{align*}
  |\im w(z)^2|=|\im f(z)-\th_0/2|=\begin{cases}
    \th_0/2,&~z\in\R^+,\\
    (\pi-\th_0)/2,&~z\in\G_0^+.
  \end{cases}
\end{align*}
Assume that $p\geq 1$. Substituting the above two formulas into \eqref{cpu+} yields
\begin{align*}
  |c_p(u)|\le {4\over\th_0}\int_0^{+\infty} {s^{-1/2}\over|w(s)|^{2p-1}}ds+{4\over\pi-\th_0}\int_0^{+\infty} {s^{-1/2}\over|w(se^{i\pi})|^{2p-1}}ds.
\end{align*}
For $s\in(0,2)$, we have
\begin{align*}
  |w(s)|&\ge\sqrt{|\im w(s)^2|}=\sqrt{\th_0/2},\\
  |w(se^{i\pi})|&\ge \sqrt{|\im w(se^{i\pi})^2|}=\sqrt{(\pi-\th_0)/2},
\end{align*}
and for $s>2$, we have
\begin{align*}
  |w(s)|^2&\ge|\re w(s)^2|=s^2-2s\cos \a +{\ln s\over2}+{\cos^2\a +\ln 2+1/2\over2}\ge(s-1)^2,\\
  |w(se^{i\pi})|^2&\ge |\re w(se^{i\pi})^2|=s^2+2s\cos \a +{\ln s\over2}+{\cos^2\a +\ln 2+1/2\over2}\ge s^2.
\end{align*}
Accordingly,
\begin{align*}
  |c_p(u)| \le&\; {4\over\th_0}\left\{\int_0^2{s^{-1/2}\over(\th_0/2)^{p-1/2}}ds+\int_2^{+\infty}{s^{-1/2}\over (s-1)^{2p-1}}ds\right\}
  \notag\\& +{4\over\pi-\th_0}\left\{\int_0^2 {s^{-1/2}\over[(\pi-\th_0)/2]^{p-1/2}}ds+\int_2^{+\infty}{s^{-1/2}\over s^{2p-1}}ds\right\}
  \notag\\ \le&\; {4\over\th_0}\left\{{2^{3/2}\over(\th_0/2)^{p-1/2}}+{1\over2p-3/2}\right\}
  +{4\over\pi-\th_0}\left\{{2^{3/2}\over[(\pi-\th_0)/2]^{p-1/2}}+{2^{3/2-2p}\over2p-3/2}\right\}.
\end{align*}
Denote the right-hand side of the above inequality by $C_p$. It then follows from \eqref{int-k}, \eqref{epp+} and \eqref{cpu+} that
\begin{equation*}
  |\widetilde{\ep}_p(N,\a)|\le \frac{\sqrt{\sin\a}}{2\pi^{3/2}}{C_p\G(p+1/2)\over N^{p}},
\end{equation*}
provided $p \geq 1$. We define
\[
\ep_p(N,\a ) := \re(\widetilde{\ep}_p(N,\a)e^{-iN\theta_0/2}),
\]
and observe that
\begin{align*}
\left|\ep_p(N,\a)\right| & \le \left|\re\left(A_p(i\cot\a)e^{-i(\theta_0 N-\pi/2)/2}\right)\right|\frac{1}{N^p} + \left|\re\widetilde{\ep}_{p+1}(N,\a)\right| \\ & \le \left|\re\left(A_p(i\cot\a)e^{-i(\theta_0 N-\pi/2)/2}\right)\right|\frac{1}{N^p} + \left|\widetilde{\ep}_{p+1}(N,\a)\right|
\end{align*}
for any $p\geq 0$. Using the explicit value
\[
-f(t_+) =\frac{\cos(2\a)}{4}+\frac{1+\ln 2}{2} - i\frac{\theta_0}{2}
\]
in \eqref{Inexpansion}, and the fact that $H_n(\sqrt N \cos \a)=I_+ + I_- = I_+ +\overline{I_+}=2\re I_+$, we can summarize our results as follows.

\begin{thm}\label{thm-case2} Let $\a\in(0,\pi/2]$, $N=2n+1 \geq 1$ and $\theta_0 =\a -\sin(2\a)/2$. Then for any $p\ge 0$, we have
\begin{align*}
H_n(\sqrt N \cos \a)  =\frac{2^{n+1}n!e^{N(\cos(2\a )+2)/4}}{\sqrt{2\pi\sin\a} N^{(n + 1)/2}}&\left\{\sum\limits_{j=0}^{p-1} \frac{\re\left(A_j(i\cot\a)e^{-i(\theta_0 N-\pi/2)/2}\right)}{N^j}+\ep_p(N,\a) \right\} \\
  =\frac{2^{n+1}n!e^{N(\cos(2\a )+2)/4}}{\sqrt{2\pi\sin\a} N^{(n + 1)/2}}&\left\{\sum\limits_{j=0}^{p-1}\frac{\re(A_j(i\cot\a))}{N^j}\cos\left(\frac{\theta_0 N}{2}-\frac{\pi}{4}\right) \right.\\ & \left.+ \sum\limits_{j=1}^{p-1} \frac{\im(A_j (i\cot \alpha ))}{N^j}\sin\left(\frac{\theta_0 N}{2}-\frac{\pi}{4}\right)+\ep_p(N,\a) \right\},
\end{align*}
where
\begin{equation}\label{bound2}
  |\ep_p (N,\a)|\le {\tC_p\over N^p},
\end{equation}
with
\begin{equation*}
  \tC_p=\left|\re\left(A_p(i\cot\a)e^{-i(\theta_0 N-\pi/2)/2}\right)\right|+\frac{\sqrt{\sin\a}}{2\pi^{3/2}}{C_{p+1} \G(p+3/2)\over N},
\end{equation*}
and
\begin{align*}
C_{p+1}={4\over\th_0}\left\{{2^{3/2}\over(\th_0/2)^{p+1/2}}+{1\over2p+1/2}\right\}
  +{4\over\pi-\th_0}\left\{{2^{3/2}\over[(\pi-\th_0)/2]^{p+1/2}}+{2^{-1/2-2p}\over2p+1/2}\right\}.
\end{align*}
In particular, $\tC_p\to |\re(A_p(i\cot\a)e^{-i(\theta_0 N-\pi/2)/2})|$ as $N\to+\infty$; namely, the error bound is close to the absolute value of the first omitted term for large $N$. The coefficients $A_j(i\cot\a)$ are polynomials
in $i\cot\a$ of degree $3j$ with rational coefficients, and these polynomials can be calculated using a recursive formula given in Appendix \ref{appendixa1}.
\end{thm}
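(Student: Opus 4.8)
The plan is to read the theorem off as the consolidation of the estimates already established in this section, so that no new analysis is required; the work is purely a matter of taking the real part of $I_+$ and tidying up the prefactor. The starting identity is $H_n(\sqrt N\cos\a)=I_++I_-=2\re I_+$, into which I would substitute the expansion \eqref{Inexpansion} for $I_+$. The only external input is the explicit value $-f(t_+)=\cos(2\a)/4+(1+\ln 2)/2-i\theta_0/2$ quoted just before the statement.

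First I would simplify the real, positive part of the prefactor. The factor $2$ from $2\re I_+$ cancels the $\tfrac12$ in the prefactor of \eqref{Inexpansion}, leaving $n!/\sqrt{\pi\sin\a}$ times $N^{-(n+1)/2}$ and the modulus of $e^{-Nf(t_+)}$. Writing $e^{-Nf(t_+)}=e^{N\cos(2\a)/4}e^{N(1+\ln 2)/2}e^{-iN\theta_0/2}$ and using $N=2n+1$ to obtain $e^{N(1+\ln 2)/2}=2^{N/2}e^{N/2}=2^{n}\sqrt2\,e^{N/2}$, I would rewrite $2^{n}\sqrt2=2^{n+1}/\sqrt2$ and absorb the $1/\sqrt2$ into $\sqrt{\pi\sin\a}$; this reproduces exactly the claimed real prefactor $2^{n+1}n!\,e^{N(\cos(2\a)+2)/4}/(\sqrt{2\pi\sin\a}\,N^{(n+1)/2})$, while the residual phase $e^{-iN\theta_0/2}$ is retained inside the real part.

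Next I would push the real part through the braces in \eqref{Inexpansion}. Combining the leftover phase $e^{-iN\theta_0/2}$ with the factor $e^{\pi i/4}$ multiplying the sum yields the oscillatory factor $e^{-i(\theta_0 N-\pi/2)/2}$, producing the first (compact) form with summand $\re\bigl(A_j(i\cot\a)e^{-i(\theta_0N-\pi/2)/2}\bigr)/N^j$, and the remainder becomes $\re(\widetilde{\ep}_p(N,\a)e^{-iN\theta_0/2})$, which is precisely the definition of $\ep_p(N,\a)$. To get the second form I would set $\phi=\theta_0N/2-\pi/4$, write $A_j=\re A_j+i\im A_j$, and use $\re(A_je^{-i\phi})=\re A_j\cos\phi+\im A_j\sin\phi$; since the leading coefficient $A_0$ is real, its imaginary part drops out, which is why the sine sum begins at $j=1$.

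Finally, the error bound \eqref{bound2} is nothing more than the two-line inequality displayed immediately before the statement, namely $|\ep_p(N,\a)|\le|\re(A_p(i\cot\a)e^{-i(\theta_0N-\pi/2)/2})|N^{-p}+|\widetilde{\ep}_{p+1}(N,\a)|$, combined with the remainder estimate $|\widetilde{\ep}_{p+1}(N,\a)|\le\sqrt{\sin\a}\,C_{p+1}\G(p+3/2)/(2\pi^{3/2}N^{p+1})$, where $C_{p+1}$ is the explicit constant obtained by replacing $p$ with $p+1$ in the bound previously derived for $c_p(u)$ in \eqref{cpu+}. Factoring out $N^{-p}$ then gives $\tC_p$, and since its second summand is $\mathcal{O}(1/N)$ the stated limit $\tC_p\to|\re(A_p(i\cot\a)e^{-i(\theta_0N-\pi/2)/2})|$ follows, i.e. the bound approaches the modulus of the first neglected term. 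The polynomial character and the degree $3j$ of the $A_j$ are imported from Appendix \ref{appendixa1}. Because every genuine analytic estimate is already in hand, I expect the only delicate point to be the prefactor bookkeeping—in particular the collapse of $2^{N/2}e^{N/2}$ and the half-integer powers of $2$ into the single factor $2^{n+1}/\sqrt2$—together with confirming that the $j=0$ term contributes no sine.
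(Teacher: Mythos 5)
Your proposal is correct and follows essentially the same route as the paper: the theorem is the consolidation, via $H_n(\sqrt N\cos\a)=2\re I_+$, of the expansion \eqref{Inexpansion}, the bound on $c_p(u)$, and the recursive identity $\widetilde{\ep}_p=A_p(i\cot\a)e^{\pi i/4}N^{-p}+\widetilde{\ep}_{p+1}$, with exactly the prefactor bookkeeping $2^{N/2}e^{N/2}=2^{n+1}e^{N/2}/\sqrt2$ and the phase combination $e^{-iN\theta_0/2}e^{\pi i/4}=e^{-i(\theta_0N-\pi/2)/2}$ that you describe. Your observations that the sine sum starts at $j=1$ because $A_0=1$ is real, and that the bound for $p\ge0$ only needs the estimate on $\widetilde{\ep}_{p+1}$ (valid for $p+1\ge1$), match the paper's argument.
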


\section{Case III: \texorpdfstring{$x=1$}{x = 1}}\label{sec4}
The saddle points coincide $t_s=t_\pm=1/2$.
We have $f(t_s)=-3/4-(\ln 2)/2$, $f'(t_s)=f''(t_s)=0$ and $f'''(t_s)=8$.
We introduce the analytic function
\begin{equation}\label{w-pm-f}
  w(z)=[f(z)-f(t_s)]^{1/3},
\end{equation}
with the branch of the cube root being chosen so that
\begin{equation}\label{w-pm0}
  w(z)=(4/3)^{1/3}(z-t_s)e^{-2i\pi/3}+\mathcal{O}((z-t_s)^2)
\end{equation}
for $z$ in a small complex neighborhood of $t_s$.
We shall extend the definition of $w(z)$ by analytic continuation to $\C\cut(-\infty,0]$.
First, we investigate the equation $\im[f(z)-f(t_s)]=\im f(z)=0$, which in polar coordinates $z=re^{i\th}$ is
\begin{equation}\label{r-eq0}
  r^2\sin(2\th)-2r\sin\th+\th/2=0.
\end{equation}
This equation is clearly satisfied by all $z>0$. There are two further solutions:
\begin{align*}
  r_+(\th)=&\begin{cases}
    \ds{\th/(2\sin\th)\over1+\sqrt{1-\th\cot\th}},&~\th\in(0,\pi),\\\\
    \ds{1\over2},&~\th=0,\\\\
    \ds{1+\sqrt{1-\th\cot\th}\over2\cos\th},&~\th\in(-\pi/2,0),
  \end{cases}
\end{align*}
and
\begin{align*}
  r_-(\th)=&\begin{cases}
\ds{1+\sqrt{1-\th\cot\th}\over2\cos\th},&~\th\in(0,\pi/2),\\\\
    \ds{1\over2},&~\th=0,\\\\
\ds{\th/(2\sin\th)\over1+\sqrt{1-\th\cot\th}},&~\th\in(-\pi,0),
  \end{cases}
\end{align*}
where we have defined $r_\pm(\th)$ as piecewise functions so that they are differentiable at $\th=0$.
Note that since $1-\th\cot\th=\th^2/3+\mathcal{O}(\th^3)$ for small $|\th|>0$, we have
$$\sqrt{1-\th\cot\th}={|\th|\over\sqrt3}+\mathcal{O}(\th^2)$$
and
\begin{align*}
  r_\pm(\th)={1\over2}\mp{\th\over2\sqrt3}+\mathcal{O}(\th^2)
\end{align*}
for small values of $\th$. Especially,
\begin{align}\label{r-pm0}
  r_\pm(0)={1\over2},~~r_\pm'(0)=\mp{1\over2\sqrt3}.
\end{align}
Now, we introduce two contours
\begin{align*}
  \G_1 =\{r_+(\th)e^{i\th}:\th\in(-\pi/2,\pi)\},~~
  \G_2 =\{r_-(\th)e^{i\th}:\th\in(-\pi,\pi/2)\},
\end{align*}
which we illustrate in Figure \ref{fig-case3}.

\begin{figure}[htp]
  \centering
  \includegraphics[height=7cm]{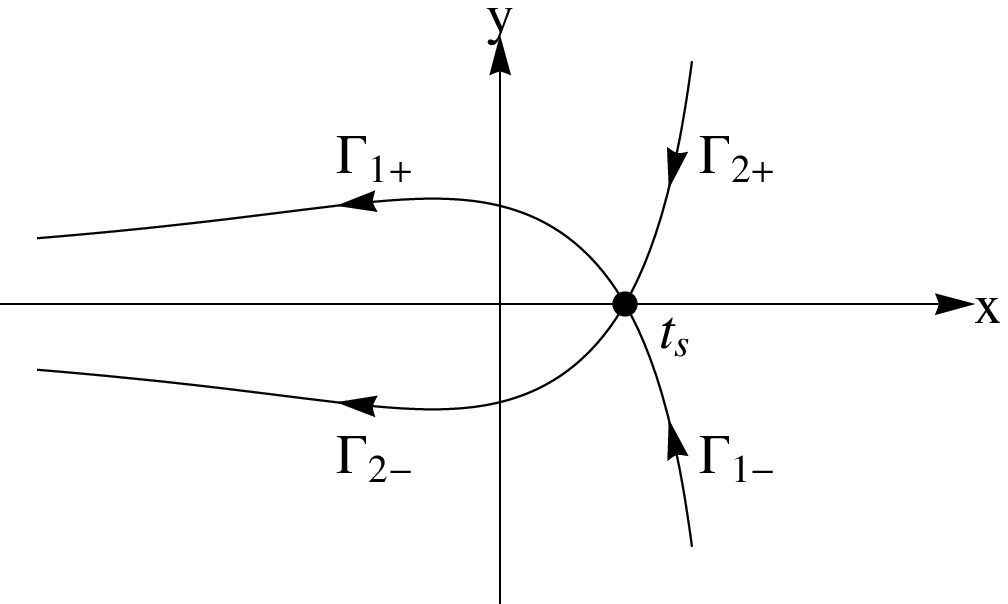}
  \caption{The contours $\G_1$ and $\G_2$. The arrows indicate the increasing direction of the phase function $f$.}
  \label{fig-case3}
\end{figure}
We have the following lemma.
\begin{lem}
  Let $z_+(\th)=r_+(\th)e^{i\th}$ for $\th\in(-\pi/2,\pi)$; the function $f(z_+(\th))-f(t_s)$ increases from $-\infty$ to $+\infty$ as $\th$ increases from $-\pi/2$ to $\pi$.
  Let $z_-(\th)=r_-(\th)e^{i\th}$ for $\th\in(-\pi,\pi/2)$; the function $f(z_-(\th))-f(t_s)$ decreases from $+\infty$ to $-\infty$ as $\th$ increases from $-\pi$ to $\pi/2$.
\end{lem}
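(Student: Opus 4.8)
The plan is to exploit the fact that $\G_1$ lies, by construction, in the level set $\im f(z)=0$, so that along it $f(z_+(\th))-f(t_s)$ is a genuinely \emph{real-valued} function of the single parameter $\th$; call it $g(\th)$. I would prove the statement for $\G_1$ by showing $g'(\th)\ge 0$ with equality only at the degenerate point $\th=0$, and then separately computing the two endpoint limits. The statement for $\G_2$ then follows for free from the reflection symmetry: since $r_-(\th)=r_+(-\th)$ one has $z_-(\th)=\overline{z_+(-\th)}$, and $f(\bar z)=\overline{f(z)}$ together with the reality of $g$ forces $f(z_-(\th))-f(t_s)=g(-\th)$. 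As $\th$ runs from $-\pi$ to $\pi/2$ the argument $-\th$ runs \emph{decreasingly} from $\pi$ to $-\pi/2$, which turns the increase of $g$ into the required decrease, and matches the prescribed domains exactly.

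For the monotonicity I would write $u=\re f$ and $v=\im f$ in polar coordinates $z=re^{i\th}$ and use the polar Cauchy--Riemann relations $u_\th=-r\,v_r$ and $v_\th=r\,u_r$, which follow from $\partial_\th f = ir\,\partial_r f$. Differentiating the defining constraint $v(r_+(\th),\th)=0$ gives $r_+'(\th)=-v_\th/v_r=-r\,u_r/v_r$, and substituting this into $g'(\th)=u_r\,r_+'(\th)+u_\th$ together with the two relations collapses everything to
\[
 g'(\th)=-\frac{r_+(\th)\,|f'(z_+(\th))|^2}{v_r},\qquad v_r=\partial_r v=2\sin\th\,(2r_+(\th)\cos\th-1).
\]
Since $f'(z)=(2z-1)^2/(2z)$ vanishes only at $t_s$, the numerator is strictly positive for $\th\ne 0$, so the sign of $g'$ is governed entirely by $-v_r$.

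The decisive computation is then the sign of $v_r$. Feeding the explicit piecewise formula for $r_+$ into $2r_+\cos\th-1$ and using $s/(1+\sqrt{1-s})=1-\sqrt{1-s}$ with $s=\th\cot\th$, I expect the clean identity $2r_+(\th)\cos\th-1=\mp\sqrt{1-\th\cot\th}$ on $(0,\pi)$ and $(-\pi/2,0)$ respectively, whence $v_r=-2|\sin\th|\sqrt{1-\th\cot\th}\le 0$ \emph{uniformly}, with equality only at $\th=0$. (One checks $1-\th\cot\th>0$ on $(-\pi/2,\pi)\cut\{0\}$ since $\tan\th>\th$ on $(0,\pi/2)$ and $\cot\th\le 0$ on $[\pi/2,\pi)$.) Hence $g'\ge 0$, with $g'=0$ only at $\th=0$, and integrating yields strict monotonicity; that $g'(0)=0$ is immediate from $f'(t_s)=0$ and causes no trouble, being an isolated zero.

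It remains to record the endpoint behaviour. From the explicit formulae, $r_+(\th)\to+\infty$ as $\th\to\pi^-$ and as $\th\to(-\pi/2)^+$; and in $u=r^2\cos(2\th)-2r\cos\th+\tfrac12\ln r$ the leading term is $r^2\cos(2\th)$, with $\cos(2\th)\to 1$ as $\th\to\pi$ and $\cos(2\th)\to-1$ as $\th\to-\pi/2$. This forces $g(\th)\to+\infty$ and $g(\th)\to-\infty$, respectively, completing the $\G_1$ case. I expect the sign bookkeeping in the $v_r$ computation---reconciling the two pieces of the piecewise definition of $r_+$ so that they glue into a single monotone branch across the degenerate saddle $\th=0$ where $g'$ vanishes---to be the only genuinely delicate point; the endpoint limits and the reflection step for $\G_2$ are then routine.
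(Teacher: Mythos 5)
Your proof is correct, and its overall skeleton (real-valuedness of $f$ along $\G_1$, strict monotonicity, endpoint limits, reflection symmetry for $\G_2$) matches the paper's; the one genuinely different ingredient is how monotonicity is established. The paper argues softly: $\frac{d}{d\th}f(z_+(\th))=f'(z_+(\th))[r_+'(\th)+ir_+(\th)]e^{i\th}$ is a product of factors that cannot vanish for $\th\ne 0$ (since $f'(z)\neq 0$ away from $t_s$, and $r_+'(\th)=r_+(\th)=0$ would contradict the defining equation $r^2\sin(2\th)-2r\sin\th+\th/2=0$), so the derivative keeps the positive sign it has near $\th=0$, where the Taylor expansion $f(z_+(\th))-f(t_s)=\tfrac{4}{9\sqrt 3}\th^3+\mathcal{O}(\th^4)$ pins it down. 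You instead compute the sign everywhere via the polar Cauchy--Riemann relations, arriving at the closed form $g'(\th)=-r_+(\th)\,|f'(z_+(\th))|^2/v_r$ with $v_r=-2|\sin\th|\sqrt{1-\th\cot\th}<0$ for $\th\neq 0$. Both are valid: the paper's argument is shorter and avoids the algebraic identity $2r_+(\th)\cos\th-1=\mp\sqrt{1-\th\cot\th}$ (which you verify correctly, and which is the only delicate bookkeeping in your route), while yours yields an explicit quantitative formula for the derivative and makes the strict monotonicity away from $\th=0$ manifest rather than inferred by continuity from the non-vanishing of a product. Your handling of the isolated zero of $g'$ at $\th=0$ and of the endpoint limits is sound, and the reflection argument $z_-(\th)=\overline{z_+(-\th)}$ together with $f(\bar z)=\overline{f(z)}$ is exactly the symmetry the paper invokes with ``follows in a similar manner.''
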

\begin{proof}
  For small $\th$, we have from $f'(t_s)=f''(t_s)=0$, $f'''(t_s)=8$ and \eqref{r-pm0} that
  \begin{align*}
    z_+(\th)-t_s=[r_+'(0)+ir_+(0)]\th+\mathcal{O}(\th^2)={e^{2i\pi/3}\over\sqrt3}\th+\mathcal{O}(\th^2),
  \end{align*}
  and
  \begin{align*}
    f(z_+(\th))-f(t_s)={4\over3}(z_+(\th)-t_s)^3+\mathcal{O}((z_+(\th)-t_s)^4)
    ={4\over9\sqrt3}\th^3+\mathcal{O}(\th^4).
  \end{align*}
  Hence, the function $f(z_+(\th))-f(t_s)$ is increasing for small $\th$.
  We claim that $f(z_+(\th))-f(t_s)$ is increasing for all $\th\in(-\pi/2,\pi)$.
  Assume to the contrary that
  $$0={d\over d\th}f(z_+(\th))=f'(z_+(\th))[r_+'(\th)+ir_+(\th)]e^{i\th}$$
  for some $\th\in(-\pi/2,0)\cup(0,\pi)$. Since $f'(z)\neq0$ for any $z\neq t_s$, we have
  $r_+'(\th)=r_+(\th)=0$. This contradicts to the equation \eqref{r-eq0}. Hence, the function $f(z_+(\th))-f(t_s)$ is increasing for all $\th\in(-\pi/2,\pi)$.
  Note that $r_+(\th)\to\infty$ as $\th$ tends to $-\pi/2$ from the right or $\pi$ from the left.
  It is readily seen that $f(z_+(\th))-f(t_s)=r_+(\th)^2\cos(2\th)-2r_+(\th)\cos\th+(1/2)\ln r_+(\th)-f(t_s)$
  increases from $-\infty$ to $+\infty$ as $\th$ increases from $-\pi/2$ to $\pi$.

The second statement of the lemma follows in a similar manner.
\end{proof}
Since the solutions of $\im f(z)=0$ are the union $\G_1\cup\G_2\cup\R^+$, the equation $f(z)-f(t_s)=0$ has a unique solution $z=t_s$ in $\C\cut(-\infty,0]$. This implies that $w(z)$ defined in \eqref{w-pm-f} can be analytically continued to $\C\cut(-\infty,0]$.
Moreover, it follows from \eqref{w-pm0} that $w(z_+(\th))$ increases from $-\infty$ to $\infty$ as $\th$ increases from $-\pi/2$ to $\pi$.

Next, we let $\G_{1+}$ be the portion of $\G_1$ in the upper half-plane and $\G_{2-}$ be the portion of $\G_2$ in the lower half-plane; namely,
\begin{align*}
  \G_{1+}=\{r_+(\th)e^{i\th}:\th\in[0,\pi)\},~~
  \G_{2-}=\{r_-(\th)e^{i\th}:\th\in(-\pi,0]\}.
\end{align*}
Both $\G_{1+}$ and $\G_{2-}$ are oriented with increasing $\th$ (counter clockwise direction). We now deform the contour of integration for $H_n(\sqrt N)$ in \eqref{Hn-G} from $\G$ to $\G_{1+}\cup\G_{2-}$:
\[
H_n(\sqrt N)= {n!\over2\pi iN^{n/2}}\int_{\G_{1+}} e^{-Nf(t)}t^{-1/2}dt + {n!\over2\pi iN^{n/2}}\int_{\G_{2-}} e^{-Nf(t)}t^{-1/2}dt =:I_+ +I_-,
\]
where the last equality defines the two integrals $I_+$ and $I_-$, respectively. Since the contours $\G_{1+}$ and $\G_{2-}$ are symmetric with respect to the real axis, have opposite orientations, and $f(t) = \overline{f(\bar t)}$, the integrals $I_+$ and $I_-$ are complex conjugates of one another. Thus, it is sufficient to study the integral $I_+$. First, we introduce the change of variable $u = w(t)$:
\begin{equation}\label{Hn-G12}
I_+ = {n!e^{-Nf(t_s)}\over2\pi iN^{n/2}} \int_0^{+\infty} e^{-Nu^3}{[w^{-1}(u)]^{-1/2}\over w'(w^{-1}(u))}du.
\end{equation}
We denote by $\G_0^\pm$ the negative real lines with arguments $\pm\pi$ and orientation from left to right.
For convenience, we denote by $\G_*$ the union of $\G_0^+$ oriented to the right and $\G_0^-$ oriented to the left.
Since $w(z)$ is analytic in $\C\cut(-\infty,0]$ and $|w(z)|/|z|^{2/3}\to1$ as $z\to\infty$, we can assert from the Cauchy integral formula that
\begin{align*}
  {[w^{-1}(u)]^{-1/2}\over w'(w^{-1}(u))}
  ={1\over2\pi i}\int_{\G_*}{z^{-1/2}\over w(z)-u}dz.
\end{align*}
Substituting this into \eqref{Hn-G12} gives
\begin{align*}
  I_+={n!e^{-Nf(t_s)}\over(2\pi i)^2N^{n/2}}\int_0^{+\infty} e^{-Nu^3}
\int_{\G_*}{z^{-1/2}\over w(z)-u}dzdu.
\end{align*}
For any $p\ge 1$, we use the identities
\begin{equation*}
  {1\over w (z)-u}=\sum_{j=1}^{p-1}{u^{j-1}\over w (z)^j}+{u^{p-1}\over w (z)^{p-1}[w (z)-u]}
\end{equation*}
and
\begin{equation}\label{cubicintegral}
  \int_0^{+\infty} e^{-Nu^3}u^{j-1}du={\G(j/3)\over3N^{j/3}}
\end{equation}
to find
\begin{equation}\label{Inexpansion2}
I_+ = \frac{2^{1/2}n!e^{-Nf(t_s)}}{6\pi N^{n/2}}\left\{ \sum\limits_{j = 1}^{p-1} \left(\frac{3}{4}\right)^{j/3} D_j e^{2\pi ij/3-\pi i/2}\frac{\G(j/3)}{N^{j/3}} + \widetilde{\ep}_p(N) \right\},
\end{equation}
where
\begin{align}
\label{Dintegral}  D_j & = \left(\frac{4}{3}\right)^{j/3}\frac{e^{-2\pi ij/3}}{2^{3/2}\pi i}\int_{\G_*}{z^{-1/2}\over w(z)^j}dz,\\
 \label{remainder} \widetilde{\ep}_p(N) & = -\frac{3}{2^{3/2}\pi}\int_\R e^{-Nu^3}u^{p-1}\int_{\G_*}{z^{-1/2}\over w(z)^{p-1}[w(z)-u]}dzdu.
\end{align}
It is shown in Appendix \ref{appendixa3} that the coefficients $D_j$ are rational numbers which can be calculated via a recurrence relation.

To find an upper bound for the remainder $\widetilde{\ep}_p(N)$, we need to estimate the integral
\begin{equation}\label{cpu-pm}
  c_p(u)=\int_{\G_*}{z^{-1/2}\over w(z)^{p-1}[w(z)-u]}dz,~~u\in\R.
\end{equation}
By expressing $w(z)$ in polar coordinates, one can easily show that
\begin{align*}
  |w(z)-u|\ge|\im w(z)|\ge{|\im w(z)^3|\over3|w(z)|^2}.
\end{align*}
We also note from \eqref{w-pm-f} that
\begin{align*}
  |\im w(z)^3|=|\im f(z)|=\pi/2,
\end{align*}
for $z=se^{\pm i\pi}\in\G_*$.
Assume that $p\ge4$. Substituting the above two formulas into \eqref{cpu-pm} yields
\begin{align*}
  |c_p(u)|\le {6\over\pi}\int_0^{+\infty} {s^{-1/2}\over|w(se^{i\pi})|^{p-3}}ds+{6\over\pi}\int_0^{+\infty} {s^{-1/2}\over|w(se^{-i\pi})|^{p-3}}ds.
\end{align*}
For $z=se^{\pm i\pi}\in\G_*$ with $s=|z|\in(0,1)$, we have
\begin{align*}
  |w(z)|\ge|\im w(z)^3|^{1/3}=(\pi/2)^{1/3}.
\end{align*}
For $z=se^{\pm i\pi}\in\G_*$ with $s=|z|>1$, we have
\begin{align*}
  |w(z)|^3\ge|\re w(z)^3|=s^2+2s+{\ln s\over2}+{\ln 2+3/2\over2}\ge s^2.
\end{align*}
Consequently,
\begin{align*}
  |c_p (u)|\le {12\over\pi}\left\{\int_0^1{s^{-1/2}\over(\pi/2)^{p/3-1}}ds+\int_1^{+\infty}{s^{-1/2}\over s^{2p/3-2}}ds\right\}
  \le{12\over\pi}\left\{{2\over(\pi/2)^{p/3-1}}+{6\over4p-15}\right\}.
\end{align*}
Denote the right-hand side of the above inequality by $C_p$. It then follows from \eqref{cubicintegral}, \eqref{remainder} and \eqref{cpu-pm} that
\[
\left|\widetilde{\ep}_p(N)\right| \le \frac{1}{2^{3/2}\pi}\frac{C_p\Gamma(p/3)}{N^{p/3}},
\]
provided $p \geq 4$. We define
\[
\ep_p (N) := \re \widetilde{\ep}_p (N),
\]
and observe that
\begin{align*}
\left|\ep_p(N)\right| & \le \left|\left(\frac{3}{4}\right)^{p/3}D_p\sin\left(\frac{2\pi p}{3}\right)\right|\frac{\G(p/3)}{N^{p/3}} + \left|\re\widetilde{\ep}_{p+1}(N)\right| \\ & \le \left|\left(\frac{3}{4}\right)^{p/3} D_p \sin\left(\frac{2\pi p}{3}\right)\right|\frac{\G(p/3)}{N^{p/3}} + \left|\widetilde{\ep}_{p + 1}(N)\right|
\end{align*}
for any $p\ge 3$. Using the explicit value $-f(t_s) = 3/4+(\ln 2)/2$ in \eqref{Inexpansion2}, and the fact that $H_n(\sqrt N)=I_+ + I_- = I_+ +\overline{I_+}=2\re I_+$, we can summarize our results as follows.

\begin{thm}\label{thm-case3} Denote $N=2n+1 \geq 1$. Then for any $p\ge 3$, we have
\[
  H_n(\sqrt N)= \frac{2^{n + 1}n!e^{3N/4}}{3\pi N^{n/2}}\left\{\sum\limits_{j=1}^{p-1} \left(\frac{3}{4}\right)^{j/3} D_j \sin \left(\frac{2\pi j}{3}\right)\frac{\G(j/3)}{N^{j/3}} + \ep_p (N)\right\},
\]
where
\begin{equation}\label{bound3}
\left|\ep_p(N)\right| \le \frac{\tC_p}{N^{p/3}},
\end{equation}
with
\[
\tC_p  = \left|\left(\frac{3}{4}\right)^{p/3} D_p\sin\left(\frac{2\pi p}{3}\right)\right|\G(p/3)+\frac{1}{2^{3/2}\pi}\frac{C_{p+1}\G((p+1)/3)}{N^{1/3}},
\]
and
\begin{align*}
C_{p+1}={12\over\pi}\left\{{2\over(\pi/2)^{(p-2)/3}}+{6\over4p-11}\right\}.
\end{align*}
In particular, if $p$ is not divisible by $3$, $\tC_p \to \left|(3/4)^{p/3}D_p\sin\left(2\pi p/3\right)\right|\G(p/3)$ as $N \to +\infty$; namely, the error bound is close to the absolute value of the first neglected term when $N$ is large. The coefficients $D_j$ are rational numbers and can be calculated using a recurrence relation given in Appendix \ref{appendixa3}.
\end{thm}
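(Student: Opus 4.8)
The plan is to assemble the representation and estimates already established in the running text, since by this point the genuine analysis---the deformation of the contour to $\G_{1+}\cup\G_{2-}$, the change of variable $u=w(t)$, the resulting double-integral representation, and the uniform bound on $c_p(u)$ via the branch-cut technique---is in place. First I would start from the identity $H_n(\sqrt N)=I_++I_-=2\re I_+$ together with the expansion \eqref{Inexpansion2} for $I_+$, so that the whole computation reduces to taking the real part of a single quantity whose prefactor $2^{1/2}n!e^{-Nf(t_s)}/(6\pi N^{n/2})$ is real and positive.

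Next I would simplify this prefactor. Substituting $-f(t_s)=3/4+(\ln 2)/2$ gives $e^{-Nf(t_s)}=e^{3N/4}2^{N/2}$, and since $N=2n+1$ we have $2^{N/2}=2^{n+1/2}$; combined with the overall factor $2$ from $2\re I_+$ this collapses to the stated prefactor $2^{n+1}n!e^{3N/4}/(3\pi N^{n/2})$ (indeed $2\cdot 2^{1/2}\cdot 2^{n+1/2}=2^{n+2}$ and $2^{n+2}/(6\pi)=2^{n+1}/(3\pi)$). For the summands I would use that each $D_j$ is rational, hence real (Appendix \ref{appendixa3}), so that $\re(D_je^{2\pi ij/3-\pi i/2})=D_j\re(e^{i(2\pi j/3-\pi/2)})=D_j\sin(2\pi j/3)$; this converts the complex phases in \eqref{Inexpansion2} into the real factors $\sin(2\pi j/3)$ appearing in the theorem.

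For the remainder I would set $\ep_p(N):=\re\widetilde{\ep}_p(N)$ and peel off the $j=p$ term: the recursive structure of \eqref{Inexpansion2} gives $\widetilde{\ep}_p(N)=(3/4)^{p/3}D_pe^{2\pi ip/3-\pi i/2}\G(p/3)N^{-p/3}+\widetilde{\ep}_{p+1}(N)$, whence, taking real parts and using $|\re z|\le|z|$, we obtain $|\ep_p(N)|\le|(3/4)^{p/3}D_p\sin(2\pi p/3)|\G(p/3)N^{-p/3}+|\widetilde{\ep}_{p+1}(N)|$. I would then invoke the bound $|\widetilde{\ep}_{p+1}(N)|\le C_{p+1}\G((p+1)/3)/(2^{3/2}\pi N^{(p+1)/3})$, which is precisely the estimate already derived for $\widetilde{\ep}_p$ with $p$ replaced by $p+1$ (legitimate exactly when $p+1\ge 4$, i.e.\ $p\ge 3$, the hypothesis of the theorem). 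Reading off $C_{p+1}$ by substituting $p\mapsto p+1$ in the formula $C_p=(12/\pi)\{2/(\pi/2)^{p/3-1}+6/(4p-15)\}$ yields the displayed $C_{p+1}$, and factoring out $N^{-p/3}$ packages the two contributions into $\tC_p$, proving \eqref{bound3}.

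Finally, for the limiting assertion I would observe that only the second term of $\tC_p$ carries a factor $N^{-1/3}$, so $\tC_p\to|(3/4)^{p/3}D_p\sin(2\pi p/3)|\G(p/3)$ as $N\to+\infty$; this is the modulus of the first neglected term, and it is nonzero precisely when $3\nmid p$, since $\sin(2\pi p/3)=0$ iff $p$ is a multiple of $3$. The only real care needed here is bookkeeping: ensuring the real-part extraction commutes with peeling off the next-order term, and that the index shift keeps us inside the range $p+1\ge 4$ where the $c_p(u)$ estimate from \eqref{cpu-pm} is valid. The substantive difficulty---the uniform boundedness of $c_p(u)$ obtained from $|\im w(z)^3|=\pi/2$ on $\G_*$---has already been dispatched, so the proof of the theorem proper is a matter of collecting these pieces rather than of fresh analysis.
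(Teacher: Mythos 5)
Your proposal is correct and follows essentially the same route as the paper: it assembles the expansion \eqref{Inexpansion2} with $H_n(\sqrt N)=2\re I_+$, simplifies the prefactor via $-f(t_s)=3/4+(\ln 2)/2$, peels off the $j=p$ term before applying $|\re z|\le|z|$ and the $c_{p+1}$ bound (valid exactly for $p+1\ge 4$), and reads off $C_{p+1}$ by the index shift $p\mapsto p+1$. All the arithmetic checks out, including the collapse $2\cdot 2^{1/2}\cdot 2^{n+1/2}/(6\pi)=2^{n+1}/(3\pi)$ and the conversion $\re\left(D_je^{2\pi ij/3-\pi i/2}\right)=D_j\sin(2\pi j/3)$.
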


\section{Numerical examples}\label{sec5}
In the three tables below, we present some numerical results that demonstrate the accuracy of our error bounds given in Theorems \ref{thm-case1}, \ref{thm-case2} and \ref{thm-case3}, respectively. We can infer from the tables that the bounds are rather realistic, that is, they do not seriously overestimate the actual error.\vspace{-4pt}

\begin{table*}[!htb]\small
\begin{center}
\begin{tabular}
[c]{ l r r}\hline
 & \\ [-2ex]
 values of $n$, $\b$ and $p$ & $n=50$, $\b=1$, $p=1$ & $n=50$, $\b=1$, $p=3$\\ [1ex]
 numerical value of $\left|\ep_p(N,\b)\right|$ & $0.0985 \times 10^{-2}$ & $0.0543 \times 10^{-5}$\\ [1ex]
 the bound \eqref{bound1} for $\left|\ep_p(N,\b)\right|$ & $0.1917 \times 10^{-2}$ & $0.1704 \times 10^{-5}$ \\ [1ex]\hline
 & \\ [-2ex]
 values of $n$, $\b$ and $p$ & $n=50$, $\b=4$, $p=1$ & $n=50$, $\b=4$, $p=3$\\ [1ex]
 numerical value of $\left|\ep_p(N,\b)\right|$ & $0.8229 \times 10^{-3}$ & $0.1879 \times 10^{-7}$\\ [1ex]
 the bound \eqref{bound1} for $\left|\ep_p(N,\b)\right|$ & $0.9811 \times 10^{-3}$ & $0.7375 \times 10^{-7}$ \\ [1ex]\hline
 & \\ [-2ex]
 values of $n$, $\b$ and $p$ & $n=100$, $\b=1$, $p=1$ & $n=100$, $\b=1$, $p=3$\\ [1ex]
 numerical value of $\left|\ep_p(N,\b)\right|$ & $0.5004 \times 10^{-3}$ & $0.0701 \times 10^{-6}$\\ [1ex]
 the bound \eqref{bound1} for $\left|\ep_p(N,\b)\right|$ & $0.7357 \times 10^{-3}$ & $0.1441 \times 10^{-6}$ \\ [1ex]\hline
 & \\ [-2ex]
 values of $n$, $\b$ and $p$ & $n=100$, $\b=4$, $p=1$ & $n=100$, $\b=4$, $p=3$\\ [1ex]
 numerical value of $\left|\ep_p(N,\b)\right|$ & $0.4134 \times 10^{-3}$ & $0.2383 \times 10^{-8}$\\ [1ex]
 the bound \eqref{bound1} for $\left|\ep_p(N,\b)\right|$ & $0.4533 \times 10^{-3}$ & $0.5887 \times 10^{-8}$ \\ [1ex]\hline
\end{tabular}
\end{center}
\setlength{\abovecaptionskip}{-2pt}\setlength{\belowcaptionskip}{5pt}
\caption{Bounds for $\left|\ep_p(N,\b)\right|$ with various $N=2n+1$, $\b$ and $p$, using \eqref{bound1}.}
\label{table1}
\end{table*}

\begin{table*}[!ht]\small
\begin{center}
\begin{tabular}
[c]{ l r r}\hline
 & \\ [-2ex]
 values of $n$, $\a$ and $p$ & $n=50$, $\a=\pi/4$, $p=1$ & $n=50$, $\a=\pi/4$, $p=3$\\ [1ex]
 numerical value of $\left|\ep_p(N,\a)\right|$ & $0.0406 \times 10^{-1}$ & $0.0103 \times 10^{-3}$\\ [1ex]
 the bound \eqref{bound2} for $\left|\ep_p(N,\a)\right|$ & $0.1147 \times 10^{-1}$ & $0.3159 \times 10^{-3}$ \\ [1ex]\hline
 & \\ [-2ex]
 values of $n$, $\a$ and $p$ & $n=50$, $\a=\pi/3$, $p=1$ & $n=50$, $\a=\pi/3$, $p=3$\\ [1ex]
 numerical value of $\left|\ep_p(N,\a)\right|$ & $0.1551 \times 10^{-2}$ & $0.0119 \times 10^{-4}$\\ [1ex]
 the bound \eqref{bound2} for $\left|\ep_p(N,\a)\right|$ & $0.2779 \times 10^{-2}$ & $0.1192 \times 10^{-4}$ \\ [1ex]\hline
 & \\ [-2ex]
 values of $n$, $\a$ and $p$ & $n=100$, $\a=\pi/4$, $p=1$ & $n=100$, $\a=\pi/4$, $p=3$\\ [1ex]
 numerical value of $\left|\ep_p(N,\a)\right|$ & $0.0689 \times 10^{-2}$ & $0.0066 \times 10^{-4}$\\ [1ex]
 the bound \eqref{bound2} for $\left|\ep_p(N,\a)\right|$ & $0.2494 \times 10^{-2}$ & $0.2001 \times 10^{-4}$ \\ [1ex]\hline
 & \\ [-2ex]
 values of $n$, $\a$ and $p$ & $n=100$, $\a=\pi/3$, $p=1$ & $n=100$, $\a=\pi/3$, $p=3$\\ [1ex]
 numerical value of $\left|\ep_p(N,\a)\right|$ & $0.0932 \times 10^{-2}$ & $0.1532 \times 10^{-6}$\\ [1ex]
 the bound \eqref{bound2} for $\left|\ep_p(N,\a)\right|$ & $0.1249 \times 10^{-2}$ & $0.8399 \times 10^{-6}$ \\ [1ex]\hline
\end{tabular}
\end{center}
\setlength{\abovecaptionskip}{-2pt}\setlength{\belowcaptionskip}{5pt}
\caption{Bounds for $\left|\ep_p(N,\a)\right|$ with various $N=2n+1$, $\a$ and $p$, using \eqref{bound2}.}
\label{table2}
\end{table*}

\begin{table*}[!htb]\small
\begin{center}
\begin{tabular}
[c]{ l r r r}\hline
 & \\ [-2ex]
 values of $n$ and $p$ & $n=50$, $p=4$ & $n=50$, $p=7$ & $n=50$, $p=10$\\ [1ex]
 numerical value of $\left|\ep_p(N)\right|$ & $0.2148 \times 10^{-3}$ & $0.0300 \times 10^{-5}$ & $0.0538 \times 10^{-7}$\\ [1ex]
 the bound \eqref{bound3} for $\left|\ep_p(N)\right|$ & $0.6615 \times 10^{-3}$ & $0.4092 \times 10^{-5}$ & $0.6667 \times 10^{-7}$\\ [1ex]\hline
 & \\ [-2ex]
 values of $n$ and $p$ & $n=100$, $p=4$ & $n=100$, $p=7$ & $n=100$, $p=10$\\ [1ex]
 numerical value of $\left|\ep_p(N)\right|$ & $0.0835 \times 10^{-3}$ & $0.0601 \times 10^{-6}$ & $0.0523 \times 10^{-8}$\\ [1ex]
 the bound \eqref{bound3} for $\left|\ep_p(N)\right|$ & $0.2253 \times 10^{-3}$ & $0.6658 \times 10^{-6}$ & $0.5438 \times 10^{-8}$\\ [1ex]\hline
\end{tabular}
\end{center}
\setlength{\abovecaptionskip}{-2pt}\setlength{\belowcaptionskip}{5pt}
\caption{Bounds for $\left|\ep_p(N)\right|$ with various $N=2n+1$ and $p$, using \eqref{bound3}.}
\label{table3}
\end{table*}

\section*{Acknowledgment}
We are very grateful to the anonymous referee for his/her careful reading and valuable suggestions which have helped to improve the presentation of this paper. The second author was supported by the JSPS Postdoctoral Research Fellowship No. P21020.

\appendix
\section{The computation of the coefficients}\label{appendixa}

\subsection{The coefficients \texorpdfstring{$A_j$}{Aj}}\label{appendixa1}

Our starting point is the integral representation \eqref{firstAint}. We first reverse the orientation of the path $\G_0 \cup \G_+$ and thereby remove the minus sign in front of the integral. Next, by an appeal to Cauchy's theorem, the contour of integration can be shrunk into a small, positively oriented circle around $t_-$. After performing the change of variable $u=ze^{\b}$, we arrive at
\begin{equation}\label{Aintegral3}
A_j (\coth\b) = \sqrt{\frac{e^{-2\b}-1}{\pi}}\frac{\G(j+1/2)}{2\pi i}\oint_{(1/2+)}\frac{(2u)^{-1/2}}{(w(ue^{-\b})^2)^{j+1/2}}du.
\end{equation}
The contour of integration is a small loop surrounding $1/2$ in the positive sense. We would like to obtain a recursive scheme for the evaluation of the integrals in \eqref{Aintegral3}. To this end, we employ a method of Lauwerier \cite{Lauwerier52ASR}. The method requires the power series expansions about $1/2$ of the functions appearing in the integrand of \eqref{Aintegral3}, namely
\[
w(ue^{-\b})^2 = \left(u-\tfrac{1}{2}\right)^2\left((e^{-2\b}-1)+\sum\limits_{k=1}^\infty\frac{(-2)^{k+1}}{k+2}\left(u-\tfrac{1}{2}\right)^k\right)
\]
and
\[
(2u)^{ - 1/2}=\sum\limits_{k=0}^\infty \frac{(-1)^k}{2^k}\binom{2k}{k}\left(u-\tfrac{1}{2}\right)^k,
\]
respectively. An application of Lauwerier's method \cite[Eqs. (8) and (9)]{Lauwerier52ASR} (with the slightly different notation $P_j (t):=(-1)^j q_j (-t/2)$) then yields
\begin{equation}\label{Aexpl}
A_j (\coth\b) = \frac{(-1)^j}{\sqrt \pi}\left(\frac{1+\coth\b}{2}\right)^j \int_0^{+\infty}e^{-s}s^{j-1/2}P_{2j}((1+\coth\b)s)ds,
\end{equation}
where $P_0(t)=1$ and
\begin{equation}\label{Prec}
P_j(t) = \frac{1}{2^j}\binom{2j}{j}-\sum\limits_{k=1}^j\frac{2^{k}}{k+2}\int_0^t P_{j-k}(s)ds
\end{equation}
for $j\geq 1$. It is readily seen from \eqref{Prec} that $P_j(t)$ is a polynomial in $t$ of degree $j$ with rational coefficients. With the substitution $t=\coth\b$, \eqref{Aexpl} reduces to the more compact form
\[
A_j (t) = \frac{(-1)^j}{\sqrt\pi}\left(\frac{1+t}{2}\right)^j\int_0^{+\infty}e^{-s}s^{j-1/2}P_{2j}((1 + t)s)ds.
\]
The $A_j(t)$ is the product of $(1+t)^j$ and a polynomial in $t$ of degree $2j$, and has rational coefficients. In particular, $A_0(t)=1$ and
\begin{align*}
& A_1(t)=\frac{(1+t)(1+5t-5t^2)}{24},\;\; A_2(t)=\frac{(1 + t)^2 (-143+298t+231t^2-770t^3+385t^4)}{1152},
\\ &
A_3 (t)=\frac{(1+t)^3(-6187-240549t+750468t^2-334565t^3-1021020t^4+1276275t^5-425425t^6)}{414720}.
\end{align*}

Consider now the right-hand side of \eqref{Aintegral}. We deform the contour of integration into a small, positively oriented circle around $t_+$, employ the identity $\sqrt{2\sin\a}=e^{-\pi i/4}e^{-i\a/2}\sqrt{e^{2i\a}-1}$, and perform the change of variable $u=ze^{-i\a}$, to obtain the equivalent form
\begin{equation}\label{Aintegral2}
\sqrt{\frac{e^{2i\a}-1}{\pi}}\frac{\G(j+1/2)}{2\pi i}\oint_{(1/2+)}\frac{(2u)^{-1/2}}{(w(ue^{i\a})^2)^{j+1/2}}du.
\end{equation}
Since $\cos \a=\cosh(-i\a)$, we see that \eqref{Aintegral3} is identical to \eqref{Aintegral2} when $\b$ is replaced by $-i\a$. This confirms that the right-hand side of \eqref{Aintegral} is indeed $A_j(\coth(-i\a))=A_j(i\cot \a)$.

\subsection{The coefficients \texorpdfstring{$D_j$}{Dj}}\label{appendixa3}

We proceed similarly as in the case of the coefficients $A_j$. By Cauchy's theorem, we can deform the contour of integration in \eqref{Dintegral} into a small, positively oriented loop contour around the saddle point $t_s=1/2$, to obtain
\begin{equation}\label{Dintegralrep}
D_j = \left(\frac{4}{3}\right)^{j/3} \frac{e^{-2\pi ij/3}}{2^{3/2}\pi i}\oint_{(1/2+)} \frac{z^{-1/2}}{w(z)^j}dz = \left(\frac{4}{3}\right)^{j/3} \frac{1}{2\pi i}\oint_{(1/2+)}\frac{(2z)^{-1/2}}{(w^3(z))^{j/3}}dz.
\end{equation}
The square and cube roots in the second integral are defined to be positive for positive real $z$ and are defined by continuity elsewhere. To apply Lauwerier's method, we expand the functions in the rightmost integral in \eqref{Dintegralrep} into Taylor series about $1/2$:
\[
w^3 (z) = \left(z -\tfrac{1}{2}\right)^3 \sum\limits_{k=0}^\infty\frac{(-2)^{k+2}}{k+3}\left(z-\tfrac{1}{2}\right)^k,\;\;
(2z)^{ - 1/2}=\sum\limits_{k=0}^\infty \frac{(-1)^k}{2^k}\binom{2k}{k}\left(z-\tfrac{1}{2}\right)^k.
\]
Lauwerier's method \cite[Eqs. (8) and (9)]{Lauwerier52ASR} (with the notation $Q_j(t) :=(-1)^{j-1} q_{j-1}(3t/4)$) then gives
\begin{equation}\label{Dformula}
D_j = \frac{(-1)^{j-1}}{\G(j/3)}\int_0^{+\infty}e^{-s}s^{j/3-1}Q_j (s)ds,
\end{equation}
where $Q_1(t)=1$ and
\[
Q_{j+1}(t) =\frac{1}{2^j}\binom{2j}{j} -3\sum\limits_{k=0}^{j-1}\frac{2^{k+1}}{k + 4}\int_0^t Q_{j-k} (s)ds
\]
for $j\geq 1$. It is easy to verify that $Q_j(t)$ is a polynomial in $t$ of degree $j-1$ with rational coefficients. When integrating $Q_j (s)$ term-by-term in \eqref{Dformula}, the factor $\G(j/3)$ in the denominator cancels, showing that $D_j$ is always a rational number. In particular,
\[
D_1=1,\;\;D_2  = 0,\;\;D_3=-\frac{3}{20},\;\;D_4=\frac{1}{6},\;\;D_5=-\frac{9}{70},\;\;D_6=\frac{3}{40},\;\; D_7=-\frac{199}{7200},\;\; D_8=-\frac{3}{700}.
\]

\end{document}